\documentclass[a4paper,11pt]{amsart}
\usepackage[centering, totalwidth = 380pt, totalheight = 600pt]{geometry}

\usepackage{amscd,amssymb,amsmath,amsfonts,amsthm, mathrsfs}
\usepackage{stmaryrd}
\usepackage{graphicx}
\usepackage{verbatim,paralist}
\usepackage{textcomp}
\usepackage[shortlabels]{enumitem}
\usepackage[colorlinks]{hyperref}
\hypersetup{%
  urlcolor=blue,linkcolor=blue,citecolor=blue
}
\usepackage{pdfpages}

\usepackage[T1]{fontenc}

\usepackage[arrow, matrix, tips, curve, graph, rotate]{xy}
\SelectTips{cm}{10}
\newcommand{\cd}[2][]{\vcenter{\hbox{\xymatrix#1{#2}}}}

\makeatletter
\def\matrixobject@{%
  \edef \next@{={\DirectionfromtheDirection@ }}%
  \expandafter \toks@ \next@ \plainxy@
  \let\xy@@ix@=\xyq@@toksix@
  \xyFN@ \OBJECT@}
\let\xy@entry@@norm=\entry@@norm
\def\entry@@norm@patched{%
  \let\object@=\matrixobject@
  \xy@entry@@norm }
\AtBeginDocument{\let\entry@@norm\entry@@norm@patched}
\makeatother

\newcommand{\twocong}[2][0.5]{\ar@{}[#2] \save ?(#1)*{\cong}\restore}
\newcommand{\twoeq}[2][0.5]{\ar@{}[#2] \save ?(#1)*{=}\restore}
\newcommand{\ltwocell}[3][0.5]{\ar@{}[#2] \ar@{=>}?(#1)+/r 0.2cm/;?(#1)+/l 0.2cm/^{#3}}
\newcommand{\ltwocello}[3][0.5]{\ar@{}[#2] \ar@{=>}?(#1)+/r 0.2cm/;?(#1)+/l 0.2cm/_{#3}}
\newcommand{\rtwocell}[3][0.5]{\ar@{}[#2] \ar@{=>}?(#1)+/l 0.2cm/;?(#1)+/r 0.2cm/^{#3}}
\newcommand{\rtwocello}[3][0.5]{\ar@{}[#2] \ar@{=>}?(#1)+/l 0.2cm/;?(#1)+/r 0.2cm/_{#3}}
\newcommand{\utwocell}[3][0.5]{\ar@{}[#2] \ar@{=>}?(#1)+/d  0.2cm/;?(#1)+/u 0.2cm/_{#3}}
\newcommand{\dtwocell}[3][0.5]{\ar@{}[#2] \ar@{=>}?(#1)+/u  0.2cm/;?(#1)+/d 0.2cm/^{#3}}
\newcommand{\ultwocell}[3][0.5]{\ar@{}[#2] \ar@{=>}?(#1)+/dr  0.2cm/;?(#1)+/ul 0.2cm/^{#3}}
\newcommand{\urtwocell}[3][0.5]{\ar@{}[#2] \ar@{=>}?(#1)+/dl  0.2cm/;?(#1)+/ur 0.2cm/^{#3}}
\newcommand{\dltwocell}[3][0.5]{\ar@{}[#2] \ar@{=>}?(#1)+/ur  0.2cm/;?(#1)+/dl 0.2cm/^{#3}}
\newcommand{\drtwocell}[3][0.5]{\ar@{}[#2] \ar@{=>}?(#1)+/ul  0.2cm/;?(#1)+/dr 0.2cm/^{#3}}

\newtheorem{theorem}{Theorem}[section]

\newtheorem{proposition}[theorem]{Proposition}

\newtheoremstyle{step}{2\bigskipamount}{\medskipamount}{\upshape}{}{\itshape}{. }{ }{\underline{Step~\thestep}}
\theoremstyle{step}

\renewcommand{\thestep}{\arabic{step}}

\theoremstyle{definition}
\newtheorem{example}[theorem]{Example}
\newtheorem{remark}[theorem]{Remark}

\newcommand{\Ra}{\Rightarrow}

\newcommand{\ldual}[1]{\mathord{{\let\nolimits\relax\sideset{^\wedge}{}{#1}}}}
\newcommand{\laction}[2]{\mathord{{\let\nolimits\relax\sideset{^{#1}}{}{#2}}}}
\newcommand{\conj}[2]{\mathord{{\let\nolimits\relax\sideset{^{#1}}{}{#2}}}}

\newcommand{\xra}{\xrightarrow}
\makeatletter
\newcommand{\xRa}[2][]{\ext@arrow 0359\Rightarrowfill@{#1}{#2}}
\makeatother

\def\CA{{\mathscr A}}
\def\CB{{\mathscr B}}
\def\CC{{\mathscr C}}
\def\CD{{\mathscr D}}

\def\CG{{\mathscr G}}

\def\CI{{\mathscr I}}

\def\CU{{\mathscr U}}
\def\CV{{\mathscr V}}

\def\CX{{\mathscr X}}

\def\dd{{\colon}}
\def\ot{{\otimes}}

\DeclareMathAlphabet{\mathbbe}{U}{bbold}{m}{n}

\newcommand{\tor}{\mathbin{\relbar\joinrel\mapstochar\joinrel\rightarrow}}

\begin{document}

\author{Richard Garner}
\address{Centre of Australian Category Theory, Macquarie University
  2109, Australia}
\email{richard.garner@mq.edu.au}
\author{Ross Street}
\address{Centre of Australian Category Theory, Macquarie University
  2109, Australia}
\email{ross.street@mq.edu.au}
\subjclass{18D20}
\keywords{weighted colimit, module, enriched category}
\title{Absolute colimits}
\date{\today}
\dedicatory{Dedicated to Bob Par\'e with gratitude for so many
  beautiful theorems}
\maketitle

\begin{abstract}
  \noindent In the context of enriched category theory, we give
  necessary and sufficient conditions for a module morphism
  $\alpha \dd M \to \CC(F,Z)$ to exhibit a functor $Z\dd \CA\to \CC$
  as an absolute $M$-weighted colimit of a functor ${F\dd \CB\to \CC}$.
  We also review, with short proofs, the various criteria for the
  weight $M$ itself to be absolute, in the sense that any $M$-weighted
  colimit is absolute. Finally, we prove that any absolute
  $M$-weighted colimit can be viewed as a colimit weighted by an
  absolute weight $M'$.
\end{abstract}

\section*{Introduction}\label{intro}

In \cite{Par1969}, Par\'e characterised conical limits of functors
$F\dd \CB\to \CC$ which are {\em absolute} in the sense that they are
preserved by all functors out of $\CC$. For example, when $\CB$ is the
free category on an idempotent, a colimit is a splitting for the
idempotent picked out by $F$, which is always absolute. But when $\CB$ is the
parallel-pair category, a colimit is a coequaliser, which is only
\emph{sometimes} absolute---for example, when it admits an enhancement
to a split coequaliser diagram. Par\'e's result says
what this ``enhancement'' should be in general.

Par\'e also characterised conical limits of functors $F\dd \CB\to \CC$
with $\CC$ additive which are {\em additively absolute}, that is,
preserved by all additive functors out of $\CC$. In this setting, it
is the
splittings of idempotents and finite direct sums
that are always absolute, while other kinds of colimit are only
still sometimes absolute---again, under circumstances delineated by
Par\'e's result.

The additive case points towards the context of enriched category
theory. In this setting, the second author~\cite{23} characterised
those weights $M$ for which every $M$-weighted colimit is absolute in
the sense that it is preserved by all (enriched) functors; the
condition is that $M$ should have a right adjoint module $M^*$.
Important examples of such enriched absolute colimit-types include
coinvariants of a finite group action (in the $\mathbb{Q}$-linear
setting), degree shift (in the dg setting), limits of Cauchy sequences
(in the setting of Lawvere's generalised metric
spaces~\cite{LawMetric}) and, in an extremely pretty and
striking result due to Walters~\cite{Walters}, glueings of
compatible families in a sheaf---with part of the prettiness being
that one must consider enrichment not just in a monoidal category, but
a \emph{bicategory}.

Yet applying the results of~\cite{23} to the situations considered
in~\cite{Par1969} does not recover Par\'e's results. In the unenriched
case, the absolute weights are generated by the splittings of
idempotents, and in the additive case, by the idempotent-splittings
and the finite direct sums: so we learn nothing about the colimit-types
like coequalisers which are sometimes, but not always, absolute.

In this article, we present a new theorem which is closer to a pure
enriched version of~\cite{Par1969}. As we will see, it is strong
enough to deduce the results in~\cite{23}, as well as those in the first
author's~\cite{Garn2014}: given a module $M$ with right adjoint $M^*$,
\emph{op.~cit.} showed the equivalence of having an $M$-weighted
colimit for a functor $F$, having an $M^*$-weighted limit for $F$, and
having an $M$-weighted cocone and an $M^\ast$-weighted cone making two
diagrams commute. (This generalises the familiar fact that direct sums
in an additive category are both coproducts and products.) 
The second author's paper \cite{GarPow} with Power is also relevant here.

To close the circle, we prove a second new theorem, showing that any
weighted colimit $F \colon \CB \rightarrow \CC$ which is absolute in
the broader sense of~\cite{Par1969} is also an absolute colimit in the
narrower sense of~\cite{23}, so long as we permit ourselves to change
the diagram and the weight. For example, a coequaliser which admits a
splitting (absolute in the sense of~\cite{Par1969}) can be recast as a
coequaliser of a contractible pair (which is absolute in the sense
of~\cite{23}).

\section{Enriched Par\'e}\label{givenfun}

We work in the context of enriched categories and weighted (sometimes called ``indexed'') colimits;
see \cite{12, KellyBook, LawMetric}. So we may capture results
like those of Walters, we work with enrichment over a \emph{bicategory}~\cite{18, 21}.
The ``presheaf'' or ``right module'' category of a category $\CC$ will be denoted by $\mathcal{P}\CC$
with Yoneda embedding functor $\mathrm{Y}\dd \CC\to \mathcal{P}\CC$. 
We also have $\mathcal{P}^{\dagger}\CD \mathbin{:=} \mathcal{P}(\CD^\mathrm{op})^\mathrm{op}$ with corresponding Yoneda embedding functor 
$\mathrm{Y}^{\dagger}\dd \CD\to \mathcal{P}^{\dagger}\CD$; this is such that
\begin{eqnarray}\label{Pdagger}
[\CD,\mathcal{P}\CC]\cong [\CC,\mathcal{P}^{\dagger}\CD]^{\mathrm{op}}\rlap{ .}
\end{eqnarray}

We write $\mathrm{Mod}$ for the bicategory whose objects are (enriched) categories and whose 
morphisms are modules, also known as ``bimodules'', ``profunctors'' or ``distributors''.
A module $M\dd \CA\tor \CB$ is a functor $M\dd \CA\to \mathcal{P}\CB$ but we
write $M(B,A)$ instead of $(MA)B$. The identity module of $\CB$, also denoted by $\CB$, 
amounts to the Yoneda embedding.

We are assuming the base for enrichment is rich enough that
$\mathrm{Mod}$ should admit right liftings and right extensions
\cite{12, 21}. For modules $M\dd \CA\tor \CB$ and $K\dd \CC\tor \CB$, we
write $\CB(M,K)\dd \CC\tor \CA$ for the right lifting of $K$ through
$M$; it is given on objects by
\begin{eqnarray}\label{lifting}
\CB(M,K)(A,C) = \mathcal{P}\CB(M(-,A),K(-,C)) \ .
\end{eqnarray}

We identify a functor $F\dd \CB\to \CC$ with the module
$F\dd \CB\tor \CC$ given on objects by $F(C,B)=\CC(C,FB)$. With this
notation and that of~\eqref{lifting}, we have that
$F = \CC(1, F) \colon \CB \tor \CC$. On the other hand, we also have
$\CC(F,1)\dd \CC\tor \CB$, which is right adjoint to $\CC(1,F)$ in
the bicategory $\mathrm{Mod}$.

For a module $M\dd \CA\tor \CB$, the {\em $M$-weighted colimit} of a
functor $F\dd \CB\to \CC$, when it exists, is a functor 
$Z\dd \CA\to \CC$ together with an isomorphism
\begin{eqnarray}\label{defweightedcol}
\CC(Z,1)\cong \CB(M,\CC(F,1)) \ .
\end{eqnarray}
By the Yoneda Lemma, such an isomorphism is induced by a module morphism 
$\alpha \dd M\Ra \CC(F,Z)$ which is said to {\em exhibit} the colimit. 
The colimit is {\em preserved} by a functor $H\dd \CC\to \CX$ when the
module morphism 
\begin{eqnarray*}
\CX(HZ,1)\xra{\CX(HF,-)}\CB(\CX(HF,HZ),\CX(HF,1))\xra{\CB(H_{F,Z},1)} \\
\CB(\CC(F,Z),\CX(HF,1))
\xra{\CB(\alpha,1)}\CB(M,\CX(HF,1))
\end{eqnarray*}
is invertible (the case $H=1_{\CC}$ shows how to recover the isomorphism \eqref{defweightedcol}
from $\alpha$).
The $M$-weighted colimit $Z$ is called {\em absolute} when it is preserved by all functors out of $\CC$.

\begin{theorem}\label{MT} 
  Consider a module $M\dd \CA\tor \CB$ and a functor $F\dd \CB\to\CC$.
  The following conditions are equivalent:
\begin{enumerate}[(i),itemsep=0.25\baselineskip]
\item $F$ has an absolute $M$-weighted colimit;
\item There is a functor $Z\dd \CA\to\CC$ and module morphisms ${\alpha
  \dd M\Ra \CC(F,Z)}$ and $\beta \dd Z\Ra F\circ M$ satisfying \eqref{alphabeta1} and \eqref{alphabeta2} below;
\item The composite module $F\circ M \colon \CA \tor \CC$ is representable (that is, $F\circ M\cong Z$ for some functor
$Z\dd \CA\to\CC$);
\item (If idempotents split in $\CC$) There exist a functor $Z\dd \CA\to\CC$ and module morphisms $\alpha \dd M\Ra \CC(F,Z)$, $\beta \dd Z\Ra F\circ M$ satisfying condition \eqref{alphabeta1} below. 
\end{enumerate}
\begin{equation}\label{alphabeta1} 
   \cd[@+0.5em]{&
    {\CA} \ar|@{|}[r]^-{M} \ar|@{|}[d]_{M} \ar|@{|}[dr]^(0.4){Z}  &
    {\CB} \dtwocell[0.3]{dl}{\alpha} \dtwocell[0.7]{dl}{\beta} \twoeq{drrr}& & &
    {\CA} \ar|@{|}[r]^-{M} \ar|@{|}[d]_{M}   &
    {\CB}  \\ &
    {\CB} \ar|@{|}[r]_-{F} &
    {\CC}\ar|@{|}[u]_{\CC(F,1)} &&&
    {\CB} \ar|@{|}[r]_-{F} \ar|@{|}[ur]^-{\CB}&
    {\CC}\ar|@{|}[u]_{\CC(F,1)}\dtwocell[0.35]{ul}{\eta_F}
  }
\end{equation}

\begin{equation}\label{alphabeta2} 
\begin{aligned}
   \cd{
      & & &\CC \\
      \CA \ar|@{|}@/^6pt/[urrr]^{Z} \ar|@{|}[rr]^-{M} \ar|@{|}@/_6pt/[drrr]_{Z} \rtwocell[1.35]{rr}{\varepsilon_F} & & \CB \ar[ur]|-{F} \dtwocell[0.35]{ul}{\beta} \dtwocell[0.35]{dl}{\alpha}  \\
      & & & \CC \ar[ul]|-{\CC(F,1)} \ar|@{|}[uu]_{\CC}
  }
  \qquad {=}
  \qquad
 \cd{
      & & &\CC \\
      \CA \ar|@{|}@/^6pt/[urrr]^{Z}  \ar|@{|}@/_6pt/[drrr]_{Z} \twoeq[.9]{rr} & &   \\
      & & & \CC \ar|@{|}[uu]_{\CC}
  }
    \end{aligned}
\end{equation}
\end{theorem}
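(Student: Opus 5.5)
The approach is to reduce everything to the single module morphism $\hat\alpha \dd F\circ M \Ra Z$ obtained from $\alpha$ by the adjunction $\CC(1,F)\dashv\CC(F,1)$ in $\mathrm{Mod}$, namely $\hat\alpha = \varepsilon_F Z\cdot F\alpha$. The first observation is that, by the universal property of right liftings, module morphisms $N\Ra \CB(M,\CC(F,1))$ correspond to morphisms $M\circ N\Ra \CC(F,1)$. By the adjunction these correspond in turn to $F\circ M\circ N\Ra \CC$. Hence $\CB(M,\CC(F,1))$ is the right lifting of the identity $\CC$ through $F\circ M$, and \eqref{defweightedcol} says precisely that this lifting is represented by $\CC(Z,1)$ via $\hat\alpha$. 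With this in hand, I would read the two conditions as follows. Condition \eqref{alphabeta2} says $\hat\alpha\cdot\beta = 1_Z$. Condition \eqref{alphabeta1} says that the mate of $\beta\cdot\hat\alpha\dd F\circ M\Ra F\circ M$, which is $(\CC(F,1)\beta)\cdot\alpha$ by naturality and the triangle identities, equals $\eta_F M$. Since $\eta_F M$ is the mate of the identity, \eqref{alphabeta1} says $\beta\cdot\hat\alpha = 1_{F\circ M}$.

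The easy implications follow from this. For (ii)$\Rightarrow$(iii), the two equations make $\hat\alpha$ and $\beta$ mutually inverse, so $F\circ M\cong Z$. For (iii)$\Rightarrow$(ii), take $\beta$ to be the inverse of a given isomorphism $\hat\alpha \dd F\circ M\cong Z$ and $\alpha$ its mate. For (iv)$\Rightarrow$(iii), \eqref{alphabeta1} alone gives $\beta\cdot\hat\alpha=1$, so $e=\hat\alpha\cdot\beta$ is an idempotent module endomorphism of $Z$. By Yoneda, $e$ is a family of idempotents $e_A$ on $ZA$, natural in $A$. Splitting these in $\CC$ gives $Z'A$, which becomes a functor $Z'$ because the splittings are natural. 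Then $F\circ M\cong Z'$, as $F\circ M$ is also a splitting of $e$. For (iii)$\Rightarrow$(i), suppose $F\circ M\cong Z$ via $\hat\alpha$, and let $H\dd\CC\to\CX$. Then $H\circ F\circ M\cong H\circ Z$ as modules. The right lifting of $\CX$ through the representable $HZ$ is its right adjoint $\CX(HZ,1)$. By the same computation as above, that lifting is $\CB(M,\CX(HF,1))$. One must check that the canonical comparison in the definition of preservation is exactly this isomorphism; this is routine unwinding of $H_{F,Z}$.

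The remaining implication, (i)$\Rightarrow$(ii) or (iii), is the main obstacle. I would use preservation by a Yoneda-type functor $H\dd\CC\to\mathcal{P}\CC$. Since $F\circ M\dd\CA\to\mathcal{P}\CC$, its values are objects of $\mathcal{P}\CC$. Preservation gives
\[
\mathcal{P}\CC(\mathrm{Y}Z,1)\cong\CB\bigl(M,\mathcal{P}\CC(\mathrm{Y}F,1)\bigr),
\]
that is, $\mathcal{P}\CC(\mathrm{Y}Z,1)$ is the lifting of $\mathcal{P}\CC$ through $\mathrm{Y}\circ F\circ M$. I would evaluate this at the objects $(F\circ M)(-,A)$ of $\mathcal{P}\CC$ and take the element corresponding to the canonical morphism $M\Ra\CC(F,1)\circ F\circ M$, i.e. $\eta_F M$. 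This produces a morphism $\mathrm{Y}ZA\to (F\circ M)(-,A)$ natural in $A$, which by Yoneda is $\beta\dd Z\Ra F\circ M$. By construction its image under the isomorphism is $\eta_F M$, and this is exactly \eqref{alphabeta1}. Condition \eqref{alphabeta2} then follows from the universal property of the unpreserved colimit: $\hat\alpha\cdot\beta$ and $1_Z$ both correspond to $\alpha$. Two points need care in the bicategory-enriched setting. First, $\mathcal{P}\CC$ must be a legitimate enriched category receiving $\mathrm{Y}$. Second, the elementwise argument must be phrased representably, for instance by testing against generalised objects. If this becomes awkward, I would instead use the full subcategory of $\mathcal{P}\CC$ on the representables together with the objects $(F\circ M)(-,A)$, which suffices for the argument.
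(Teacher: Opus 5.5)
Your proposal is correct and follows essentially the paper's proof: you read \eqref{alphabeta1} and \eqref{alphabeta2} as $\beta\hat\alpha=1_{F\circ M}$ and $\hat\alpha\beta=1_Z$, exactly as the paper does. For (i)$\Rightarrow$(ii) you apply absoluteness to a fully faithful functor out of $\CC$ into a category where $F\circ M$ becomes representable, then use the existence clause to get $\beta$ and the uniqueness clause to get \eqref{alphabeta2}, which is also the paper's argument. The only real difference is that the paper uses the collage of $F\circ M$ (with $V\colon\CC\to\CC'$ and $U\colon\CA\to\CC'$) rather than $\mathrm{Y}\colon\CC\to\mathcal{P}\CC$; the collage is the economical form of your fallback subcategory and avoids the size and hom-computation issues you flag.
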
 

\begin{proof} \emph{(i) $\Ra$ (ii)}: Suppose the module morphism $\alpha \dd
  M\Ra \CC(F,Z)$ exibits $Z \colon \CA \rightarrow \CC$ as
an absolute $M$-weighted colimit of $F$. Form the collage
\begin{equation}
\begin{aligned}
 \cd{
    {\CA} \ar|@{|}[r]^-{M} \ar|@{|}[dr]_-{U}  & {\CB} \ar|@{|}[r]^-{F} & {\CC}\ar|@{|}[dl]^-{V} \\
    & {\CC'} \ltwocello{u}{\gamma} & {}
  }
\end{aligned}
\end{equation}
of the module $F\circ M$.
Herein, $U$ and $V$ are functors and $F \circ M
\cong \CC'(V,U)$ (cf.~\cite{18}).
The mate of $\gamma \colon VFM \Rightarrow U$ under the
adjunction $VF \dashv
{\CC'}(VF,1)$ gives a module morphism
$\bar{\gamma}\dd M \Ra \CC'(VF,1)\circ U \cong \CC'(VF,U)$, while absoluteness
gives $V\circ Z$ as an $M$-weighted colimit of $V\circ F$ exhibited by 
\begin{equation}
  \label{colimit-preserved}
  M\xRa{\alpha}\CC(F,Z)\xRa{V}\CC'(VF,VZ) \ .
\end{equation}
So, by the existence clause in the definition of colimit, there
is a module morphism
$\bar{\beta}\dd \CA \Ra \CC'(VZ,U) \cong \CC'(Z, 1) \circ \CC'(V,U)
\cong \CC'(Z,1) \circ FM$, whose mate $\beta$ under the adjunction
$Z \dashv \CC'(Z,1)$ is a module map $Z \Rightarrow F \circ M$. From
the fact that $\bar \beta$ factors $\bar \gamma$
through~\eqref{colimit-preserved}, it follows that this $\beta$
satisfies \eqref{alphabeta1}; and now
\eqref{alphabeta2} follows using the uniqueness clause in the
definition of colimit.

\emph{(ii) $\Ra$ (iii)}: Let
$\bar \alpha \colon F \circ M \Rightarrow Z$ be the mate of $\alpha$
under the adjunction $F \dashv \CC(F, 1)$. Clearly, \eqref{alphabeta2}
says that $\beta$ and $\bar \alpha$ exhibit $Z$ as a retract of
$F \circ M$. Now by a simple computation, or a lovely argument using
strings, \eqref{alphabeta1} says that the idempotent on $F\circ M$
coming from the retraction is the identity.

\emph{(iv) $\Ra$ (ii)}: Much as in the last case, we still obtain an
idempotent on $Z$ using \eqref{alphabeta1}. Split the idempotent on
$Z$ to obtain $Z'$, $\alpha'$, $\beta'$ as in condition (ii).

\emph{(ii) $\Ra$ (iv)} is obvious.

\emph{(iii) $\Ra$ (i)}: Clearly condition (iii) is preserved by any
functor out of $\CC$ so all that remains is to see that it implies $Z$
is an $M$-weighted colimit of $F$. However, since $F \circ M \cong Z$
by (iii), we have
\begin{eqnarray*}
\CC(Z,1)\cong \CC(F\circ M,1)\cong \CB(M,\CC(F,1)) \ ,  
\end{eqnarray*}
as in \eqref{defweightedcol}. 
\end{proof}

To obtain the result for limits rather than colimits, notice that the limit of the functor $F\dd \CB\to \CC$ weighted by a module $N\dd \CB\tor \CA$ is the colimit of $F\dd \CB^{\mathrm{op}}\to \CC^{\mathrm{op}}$
weighted by $N^{\mathrm{op}}\dd \CA^{\mathrm{op}}\tor \CB^{\mathrm{op}}$.

\section{Par\'e $\Rightarrow$ Street $\wedge$ Garner}\label{allfun}

The second author showed in~\cite{23} that, when a module
$M \colon \CA \tor \CB$ has a right adjoint $N$, \emph{all}
$M$-weighted colimits are necessarily absolute. In~\cite{Garn2014},
the first author refined this by showing that, in this situation,
making a functor $F \colon \CB \rightarrow \CC$ into an $M$-weighted
colimit for $F$ is equivalent to making it into an $N$-weighted
limit, and that both are in turn the same as providing:
\begin{itemize}
\item A module morphism $\alpha \colon M \Rightarrow \CC(F,Z)$; and
\item A module morphism $\bar\beta \colon N \Rightarrow \CC(Z,F)$,
\end{itemize}
subject to the following two conditions:
\begin{equation}\label{alphabeta3} 
   \cd[@+0.5em]{&
    {\CA} \ar|@{|}[rr]^-{M} \ar|@{|}@/^6pt/[drr]^-{Z}  & &
    {\CB} \dtwocell[0.3]{dll}{\alpha}  \twoeq{drrr}& & &
    {\CA} \ar|@{|}[r]^-{M}  &
    {\CB}  \\ &
    {\CB} \ar|@{|}[u]^{N} \rtwocello[0.3]{ur}{\!\!\bar\beta} \ar|@{|}[r]_-{F} &
    {\CC} \ar|@{|}[r]_-{\CC} \dtwocell[0.35]{u}{\varepsilon_Z} \ar[ul]|-{\CC(Z,1)} &
    {\CC}\ar|@{|}[u]_{\CC(F,1)} &&&
    {\CB} \ar|@{|}[r]_-{F} \ar[ur]|-{\CB} \ar|@{|}[u]^-{N} &
    {\CC}\ar|@{|}[u]_{\CC(F,1)}\dtwocell[0.3]{ul}{\eta_F}\dtwocell[0.7]{ul}{\varepsilon}
    }
\end{equation}

\begin{equation}\label{alphabeta4} 
\begin{aligned}
   \cd{
      & \CA \rtwocell[0.6]{d}{\eta}  & &\CC \ar|@{|}[ll]_-{\CC(Z,1)}\\
      \CA \ar|@{|}[ur]^{\CA} \ar|@{|}[rr]^-{M} \ar|@{|}@/_6pt/[drrr]_{Z} \rtwocell[1.35]{rr}{\varepsilon_F} & & \CB \ar|@{|}[ur]^-{F} \ar|@{|}[ul]_-{N}  \rtwocell{u}{\bar\beta} \dtwocell[0.35]{dl}{\alpha}  \\
      & & & \CC \ar[ul]|-{\CC(F,1)} \ar|@{|}[uu]_{\CC}
  }
  \qquad {=}
  \qquad
 \cd{
      & \CA \rtwocell[0.4]{ddrr}{\eta_Z}  & &\CC \ar|@{|}[ll]_-{\CC(Z,1)}\\
      \CA \ar|@{|}[ur]^{\CA} \ar|@{|}@/_6pt/[drrr]_{Z} \\
      & & & \CC\rlap{ .} \ar|@{|}[uu]_{\CC}
  }
    \end{aligned}
\end{equation}

This result is in fact an easy consequence of our Theorem~\ref{MT}.
Indeed, given $\alpha$ and $\bar \beta$ as above, taking the mate of
$\bar \beta \colon N \rightarrow \CC(Z,F)$ under the adjunctions
$Z \dashv \CC(Z, 1)$ and $M \dashv N$ yields a module morphism
$\beta \colon Z \Rightarrow F \circ M$, with the
conditions~\eqref{alphabeta3} and~\eqref{alphabeta4} transforming
under the mates correspondence to the conditions~\eqref{alphabeta1}
and~\eqref{alphabeta2}. So in this situation, $\alpha$ exhibits $Z$ as
the $M$-weighted colimit of $F$, and this colimit is necessarily
absolute. Because the data above are self-dual (that is, correspond to an
instance of the same data in $\CC^\mathrm{op}$), it likewise follows
that $\bar \beta$ exhibits $Z$ as the $N$-weighted limit of $F$, and
this limit is necessarily absolute.

A similar argument also reconstructs the main result of the
second author's~\cite{23}, namely that colimits by left adjoint modules are 
necessarily absolute; for completeness, we now provide a result which
collates various known characterisations of absolute weights; these can
be extracted from Theorem I.4.1 of Dubuc \cite{DubucPhD}, Theoreme
I.1.1 of Gouzou-Grunig \cite{GouGru}, Proposition 1 of Johnson's
\cite{SRJ2}, Chapter 4 Section 1 Exercise 4 (Par\'e) of Mac Lane
\cite{CWM}, and the aforementioned main result of~\cite{23}.
The equivalence of items (i) and (iii) was also established by Arkor-McDermott
as Lemma 2.32 of \cite{ArkMcD} showing it to hold at the generality of $j$-absolute colimits.
Compare also Theorem 5.16(c--d) of \cite{Kou}.
The reader might read \cite{KelSch} for background on various classes of colimits, particularly see Section 6 of that paper. 

\begin{theorem}\label{reconstr}
The following conditions on a module $M\dd \CA\tor \CB$ are equivalent:
\begin{enumerate}[(i),itemsep=0.25\baselineskip]
\item $M$ is a left adjoint in $\mathrm{Mod}$;
\item Every $M$-weighted colimit is absolute;
\item The $M$-weighted colimit of $\mathrm{Y}^{\dagger}\dd \CB\to \mathcal{P}^{\dagger}\CB$
is absolute;
\item If $R\dd \CB\tor \CA$ is the right lifting of the identity module
  of $\CB$ along $M$, exhibited by the 2-cell
  $\varepsilon\dd M\circ R\Ra \CB(1,1)$, then there exists a module
  morphism $\eta \dd \CA(1,1)\Ra R\circ M$ such that
  $(\varepsilon \circ M)(M\circ \eta) = 1_M$;
\item For each object $U$ of the base bicategory and each object
  $A\dd U\to \CA$ of $\CA$, the module $M(-,A)\dd U\tor \CB$ has a
  right adjoint module.
\end{enumerate}
\end{theorem}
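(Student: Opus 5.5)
I will prove the cycle (i) $\Ra$ (ii) $\Ra$ (iii) $\Ra$ (iv) $\Ra$ (i), and then treat (v) separately by showing (i) $\Leftrightarrow$ (v). The main engine throughout is Theorem~\ref{MT}, specifically the equivalence of its (i) and (iii): an $M$-weighted colimit of $F$ is absolute exactly when the composite $F\circ M$ is representable.

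\emph{(i) $\Ra$ (ii).} Let $M \dashv N$. Suppose $\alpha\colon M\Ra\CC(F,Z)$ exhibits $Z$ as an $M$-weighted colimit of $F$. By~\eqref{defweightedcol}, $\CC(Z,1)\cong\CB(M,\CC(F,1))$. Since $M\dashv N$, right lifting through $M$ is composition with $N$, so this says $\CC(Z,1)\cong N\circ\CC(F,1)$. This module has a left adjoint $F\circ M$, and $Z\dashv\CC(Z,1)$; uniqueness of adjoints then gives $F\circ M\cong Z$. Theorem~\ref{MT}(iii) now yields absoluteness. Alternatively, one could build $\beta$ as the mate of $\bar\beta$ as in the discussion above.

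\emph{(ii) $\Ra$ (iii)} is trivial, provided the colimit in (iii) exists. Here the $M$-weighted colimit of $\mathrm{Y}^\dagger$ is given by a functor $\CA\to\mathcal{P}^\dagger\CB$ whose value at $A$ is $M(-,A)$ regarded as an object of $\mathcal{P}^\dagger\CB$; I will check this via \eqref{Pdagger} and Yoneda.

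\emph{(iii) $\Ra$ (iv).} By Theorem~\ref{MT}, absoluteness gives $\mathrm{Y}^\dagger\circ M\cong Z$, together with $\alpha,\beta$ satisfying~\eqref{alphabeta1}. Now take $F=\mathrm{Y}^\dagger$. Then $\CC(F,1)\colon \mathcal{P}^\dagger\CB\tor\CB$, restricted along $Z$, should compute $\CB(M,\CB)=R$, because $\mathcal{P}^\dagger\CB(\mathrm{Y}^\dagger B, M(-,A))$ is the lifting formula~\eqref{lifting}. Under these identifications I take $\eta$ to be $\beta$ (transported) composed with the unit, and $\varepsilon$ to be the counit of the lifting induced by $\alpha$. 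Equation~\eqref{alphabeta1} should then translate literally into $(\varepsilon\circ M)(M\circ\eta)=1_M$. I expect this translation to be the main obstacle: the delicate part is the bookkeeping of identifying $\CC(Z,1)$-type data in $\mathcal{P}^\dagger\CB$ with $R$ and checking that the mates match. If that becomes too messy, my fallback is to work directly with the universal property of $R$.

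\emph{(iv) $\Ra$ (i).} This is the standard argument that one triangle identity plus a universal counit gives an adjunction. Write $\varepsilon\circ(M\circ\eta\circ R)$ for the composite $M\circ R\Ra M\circ R\circ M\circ R\Ra M\circ R\Ra\CB$. Using the given identity, this composite equals $\varepsilon$. By the uniqueness clause of the right lifting, the map $(R\circ\varepsilon)(\eta\circ R)\colon R\Ra R$ therefore equals $1_R$. So $M\dashv R$.

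\emph{(i) $\Leftrightarrow$ (v).} For (i) $\Ra$ (v): precompose with the module $A\colon U\tor\CA$; composites of left adjoints are left adjoints, and $M(-,A)=M\circ\CA(1,A)$. For (v) $\Ra$ (i): set $R(A,B)$ to be the right adjoint of $M(-,A)$ evaluated at $B$. Then $R(A,-)=\CB(M(-,A),\CB)$ is the lifting computed pointwise, so $R$ is the lifting $\CB(M,\CB)$. The pointwise units assemble, naturally in $A$ by uniqueness of adjoints and mates, into $\eta$ satisfying (iv). Hence (v) $\Ra$ (iv), which closes the loop.
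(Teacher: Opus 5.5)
\paragraph{The gap: (iii)$\Ra$(iv).} This is the step that carries the whole theorem, and it is not established: you leave it as an expectation, and the identifications you build it on are wrong.

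The $M$-weighted colimit of $\mathrm{Y}^{\dagger}$ does exist, but its value at $A$ is not $M(-,A)$. That is a presheaf on $\CB$, not an object of $\mathcal{P}^{\dagger}\CB=\mathcal{P}(\CB^{\mathrm{op}})^{\mathrm{op}}$. Computing with \eqref{defweightedcol} gives $ZA\cong\mathrm{lim}(M(-,A),\CB(1,-))=R(A,-)$, so $Z$ corresponds to $R$ under \eqref{Pdagger}; this is the paper's $N$.

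Consequently the module isomorphic to $R$ is $\mathcal{P}^{\dagger}\CB(Z,\mathrm{Y}^{\dagger})$, by Yoneda. It is not ``$\CC(F,1)$ restricted along $Z$''. That module is $\mathcal{P}^{\dagger}\CB(\mathrm{Y}^{\dagger},Z)\colon\CA\tor\CB$, the right extension of $\CB$ along $R$. You appear to have read $\mathcal{P}^{\dagger}\CB$ as $(\mathcal{P}\CB)^{\mathrm{op}}$.

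More seriously, you never invoke the fact that makes \eqref{alphabeta1} collapse to a triangle identity. The right-hand side of \eqref{alphabeta1} is $\eta_F\circ M$. Only because $\mathrm{Y}^{\dagger}$ is fully faithful is $\eta_{\mathrm{Y}^{\dagger}}\colon\CB\Ra\mathcal{P}^{\dagger}\CB(\mathrm{Y}^{\dagger},\mathrm{Y}^{\dagger})$ invertible, so that \eqref{alphabeta1} can say $(\varepsilon\circ M)(M\circ\eta)=1_M$. For a general $F$ there is no such translation. So ``should then translate literally'' hides the one essential input.

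\paragraph{Repair and comparison.} With these corrections your route goes through:
\begin{enumerate}[(a)]
\item the mate of $\beta$ under $Z\dashv\mathcal{P}^{\dagger}\CB(Z,1)$ is a morphism $\CA\Ra\mathcal{P}^{\dagger}\CB(Z,\mathrm{Y}^{\dagger})\circ M\cong R\circ M$;
\item the universal cocone $\alpha$ transposes to the evaluation $\varepsilon\colon M\circ R\Ra\CB$;
\item after cancelling the invertible $\eta_{\mathrm{Y}^{\dagger}}$, \eqref{alphabeta1} is exactly the identity required in (iv).
\end{enumerate}
Shorter still is the bicategorical argument in the proof of Theorem~\ref{MT2}. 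Here $\mathrm{Y}^{\dagger}\circ M\cong Z$ has right adjoint $\mathcal{P}^{\dagger}\CB(Z,1)$, and $\mathrm{Y}^{\dagger}$ is fully faithful, so $M\dashv\mathcal{P}^{\dagger}\CB(Z,\mathrm{Y}^{\dagger})\cong R$.

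Both routes differ from the paper. The paper proves (iii)$\Ra$(i) by showing that $T\circ M$ is the right extension of $T$ along $N$ for every $T$, via $(T\circ M)(-,A)\cong(\mathrm{Y}^{\dagger}\circ M)(T,A)\cong Z(T,A)\cong\mathrm{lim}(N(A,-),T)$. Your version has the merit of producing the unit explicitly from $\beta$.

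\paragraph{The remaining steps are sound.}
\begin{enumerate}[(a)]
\item (i)$\Ra$(ii) is the paper's argument.
\item Your (iv)$\Ra$(i) is correct and tidier than the paper's appeal to Par\'e's exercise. Universality of $\varepsilon$ forces the idempotent $(R\circ\varepsilon)(\eta\circ R)$ to be $1_R$ outright, so no splitting is needed.
\item Your (v)$\Ra$(iv) is a workable elementary substitute for Dubuc's adjoint triangle theorem. Naturality of $\eta$ is cleanest as follows. Because each $M(-,A)$ is a left adjoint, the canonical comparison $\CB(M,\CB)\circ M\Ra\CB(M,M)$ is invertible. Take $\eta$ to be the transpose $\CA\Ra\CB(M,M)$ of $1_M$ followed by the inverse of this comparison; the triangle identity is then automatic.
\end{enumerate}
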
 
\begin{proof}
  \emph{(i) $\Ra$ (ii)}: Assume $M\dashv N$ and that $Z\dd \CA\to \CC$ is
  the $M$-weighted colimit of $F\dd \CB\to \CC$. Using
  \eqref{defweightedcol}, we see that
  $\CC(Z,1)\cong \CA(1,N\circ F) \cong N\circ \CC(F,1)$. Taking left
  adjoints yields $Z\cong F\circ M$ and the result follows by
  Theorem~\ref{MT}(iii).

  \emph{(ii) $\Ra$ (iii)} is obvious.

  \emph{(iii) $\Ra$ (i)}: Let $Z\dd \CA\to \mathcal{P}^{\dagger}\CB$
  be the $M$-weighted colimit of $\mathrm{Y}^{\dagger}$, and let
  $N\dd \CB\tor \CA$ be the module corresponding to the functor $Z$
  under the isomorphism \eqref{Pdagger}; the formula is
  $N(A,B)=\mathrm{lim}(M(-,A),\CB(1,B))$. We shall show that $M$ is a
  left adjoint for $N$, or equivalently, that $M$ is the absolute
  right extension of $\CB\dd \CB\tor \CB$ along $N$. In other words,
  we must show that, for all modules $T\dd \CB\tor \CX$, the module
  $T\circ M$ is the right extension of $T$ along $N$. Notice that,
  using Yoneda, we have
$$(T\circ M)(-,A) \cong \mathrm{colim}(M(-,A),\mathcal{P}^{\dagger}\CB(T,\mathrm{Y}^{\dagger})) 
\cong (\mathrm{Y}^{\dagger}\circ M)(T,A) \ .$$ By
Theorem~\ref{MT}(iii), $\mathrm{Y}^{\dagger}\circ M\cong Z$. However,
$Z(T,A) \cong \mathrm{lim}(N(A,-),T)$ showing that
$Z(T,-) \cong T \circ M$ is the right extension of $T$ along $N$.

\emph{(i) $\Leftrightarrow$ (iv)}: The $\Ra$ is obvious. In the other
direction, the data of (iv) are a candidate for unit and counit for
$R$ as right adjoint to $M$. However, only one of the adjunction
identities is assumed to hold. By Par\'e's Exercise in \cite{CWM}, the
right adjoint to $M$ is obtained by splitting an idempotent on $R$.

\emph{(i) $\Leftrightarrow$ (v)}: The $\Ra$ is again obvious. In the
other direction, let $\CU$ be the category whose objects are those of
$\CA$ and whose homs $\CU(A,B)$ are all initial objects unless $A=B$
in which case the hom is an identity morphism in the base bicategory.
So the inclusion $J\dd \CU\to \CA$ is opmonadic (= ``of Kleisli
type'') in $\mathrm{Mod}$; see \cite{18}. By a dual to Dubuc's Adjoint
Triangle Theorem \cite{DubucAT}, it follows that $M$ has a right
adjoint if $M\circ J$ does, and the result follows.
\end{proof}

In the case of enrichment over the monoidal poset of extended positive
real numbers $(\mathbb{R}_+ \cup \{\infty\}, \geqslant)$, the results
of~\cite{LawMetric} show that enriched categories are
generalised metric spaces, and the absolute weights are generated by
those for limits of Cauchy sequences. Thus, it is common to refer to
modules satisfying the equivalent conditions of~Theorem
\ref{reconstr} as \emph{Cauchy modules} or \emph{Cauchy weights}.

\section{Street $\Rightarrow$ Par\'e}
In this section, we obtain a kind of converse to Theorem~\ref{MT},
showing that any absolute colimit is in fact a colimit by an
absolute weight. More precisely, let us consider a module
$M\dd \CA\tor \CB$ and a functor $F\dd \CB\to\CC$, and suppose that
\begin{equation}\label{eq:firstcolim}
  \alpha \colon M \Rightarrow \CC(F,Z)
\end{equation}
exhibits $Z \colon \CA \rightarrow \CC$ as an $M$-weighted
colimit of $F$. What we will do is to replace $M$ by
$M'$, $F$ by $F'$, and $\alpha$ by a suitable
\begin{equation}\label{eq:secondcolim}
  \alpha' \colon M' \Rightarrow \CC(F',Z)
\end{equation}
exhibiting $Z$ as a $M'$-weighted colimit of $F'$, in
such a way that if the \emph{colimit} exhibited by~\eqref{eq:firstcolim}
is absolute, then the \emph{weight} $M'$ for the
colimit~\eqref{eq:secondcolim} is absolute, i.e., $M'$ is a Cauchy
module. In this way, the apparently more general absolute
colimits of~\cite{Par1969} and Theorem~\ref{MT} reduce to the apparently less
general absolute colimits of~\cite{23}, \cite{Garn2014} and
Theorem~\ref{reconstr}.

To this end, we factorise the functor $F \colon \CB
\rightarrow \CC$
as to the left in:
\begin{equation}
  \label{eq:factorisation}
  \cd{
    F = \CB \ar[r]^-{J} & \CB' \ar[r]^-{F'} & \CC & & \CA \ar|@{|}[r]^-{M} & \CB\ar|@{|}[r]^-{J} &  \CB'\rlap{ .}
  }
\end{equation}
where $J$ is the identity on objects, and $F'$ is fully faithful, and
define $M'$ to be the composite module as to the right. We now obtain
the $\alpha'$ of~\eqref{eq:secondcolim} by transposing the composite
\begin{equation*}
  M \xRa{\ \alpha\ } \CC(F,Z) \xRa{\ \cong\ } \CC(J,1) \circ \CC(F',Z)
\end{equation*}
under the adjointness $J \dashv \CC(J,1)$. By a straightforward and
standard manipulation in~\eqref{defweightedcol}, $\alpha'$
exhibits $Z$ as the $M'$-weighted colimit of $F'$. 
  
\begin{theorem}\label{MT2} 
  In the situation just described, if $\alpha$ exhibits $Z$ as an
  \emph{absolute} $M$-weighted colimit of $F$, then $M'$
  is a Cauchy module.
\end{theorem}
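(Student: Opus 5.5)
The plan is to show directly that $M' = J\circ M$ has a right adjoint in $\mathrm{Mod}$, namely $N := \CC(Z,1)\circ F' \cong \CC(Z,F')\dd \CB'\tor\CA$. This will verify condition (i) of Theorem~\ref{reconstr}.

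First, I use the absoluteness hypothesis through Theorem~\ref{MT}. Since $\alpha$ exhibits $Z$ as an absolute $M$-weighted colimit of $F$, the implication (i) $\Ra$ (iii) gives an isomorphism $F\circ M\cong Z$ of modules $\CA\tor\CC$. The composite of functors $F'J$ corresponds to the composite module $F'\circ J$, because composition with representables is computed by Yoneda. Hence
\[ F'\circ M' = F'\circ J\circ M \cong F\circ M \cong Z. \]
So postcomposing the weight $M'$ with $F'$ yields a representable module. Representables are left adjoints, with $Z\dashv\CC(Z,1)$.

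Second, I exploit that $F'$ is fully faithful. This says exactly that the unit $\eta_{F'}\dd \CB'\Ra\CC(F',1)\circ F'$ of $F'\dashv\CC(F',1)$ is invertible. Hence for every category $\CX$, postcomposition $F'\circ-\dd\mathrm{Mod}(\CX,\CB')\to\mathrm{Mod}(\CX,\CC)$ is fully faithful: its effect on 2-cells $S\Ra T$ is, via the adjunction, the map $\mathrm{Mod}(S,T)\to\mathrm{Mod}(S,\CC(F',1)\circ F'\circ T)$ induced by the invertible $\eta_{F'}$.

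Third, I combine the two observations. For any $\CX$ and modules $S\dd\CX\tor\CA$, $T\dd\CX\tor\CB'$, there is a chain of bijections
\[ \mathrm{Mod}(M'\circ S,T)\cong\mathrm{Mod}(F'\circ M'\circ S,F'\circ T)\cong\mathrm{Mod}(Z\circ S,F'\circ T)\cong\mathrm{Mod}(S,\CC(Z,1)\circ F'\circ T). \]
This chain is natural in $S$ and $T$, and compatible with precomposition by modules $\CX'\tor\CX$. It therefore exhibits $M'\circ-\dashv N\circ-$ as an adjunction of representable pseudofunctors. By the bicategorical Yoneda lemma, this gives $M'\dashv N$ in $\mathrm{Mod}$. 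Concretely, the unit is $\CA\xRa{\eta_Z}\CC(Z,1)\circ Z\cong N\circ M'$. The counit is
\[ M'\circ N\cong\CC(F',1)\circ F'\circ M'\circ\CC(Z,1)\circ F'\cong\CC(F',1)\circ Z\circ\CC(Z,1)\circ F'\xRa{\varepsilon_Z}\CC(F',1)\circ F'\cong\CB', \]
where the last isomorphism is $\eta_{F'}^{-1}$. The triangle identities reduce to those for $Z\dashv\CC(Z,1)$ and $F'\dashv\CC(F',1)$.

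I expect the main obstacle to be bookkeeping rather than substance. One must check that the triangle identities hold on the nose once the isomorphisms $F'\circ M'\cong Z$ and $\eta_{F'}^{-1}$ are inserted. The Yoneda-style argument in the third step is designed to sidestep this. The only genuine points to confirm are that full faithfulness of $F'$ makes $F'\circ-$ fully faithful on module categories for all domains $\CX$ (including objects over different base objects $U$ of the bicategory), and that the bijections are natural in $\CX$. If needed, one can instead verify condition (iv) of Theorem~\ref{reconstr}, which requires only one triangle identity. Alternatively, checking (v) objectwise reduces to the same argument with $\CA$ replaced by a one-object category $U$.
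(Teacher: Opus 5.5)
Your proposal is correct and follows essentially the paper's proof. The paper also uses (i) $\Rightarrow$ (iii) of Theorem~\ref{MT} to get $F'\circ M'\cong Z$ representable, hence a left adjoint, and then uses the invertible unit of $F'\dashv\CC(F',1)$. It appeals to the general bicategorical fact that $M'$ then has right adjoint $\CC(Z,1)\circ F'$, which you simply spell out through your chain of bijections and explicit unit and counit.
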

\begin{proof}
  If the colimit exhibited by~\eqref{eq:firstcolim} is absolute, then
  by (i) $\Rightarrow$ (iii) of Theorem~\ref{MT}, we have that
  $F \circ M  = F' \circ JM \colon \CA \tor \CC$ is representable, and so in
  particular has a right adjoint in $\mathrm{Mod}$. Furthermore
  $F' = \CC(1,F') \colon \CB' \tor \CC$ has the right adjoint
  $\CC(F',1)$ in $\mathrm{Mod}$, and the unit of the adjunction:
  \begin{equation*}
    \CC(1,1) \Rightarrow \CC(1,F') \circ \CC(F',1) \cong \CC(F',F')\rlap{ .}
  \end{equation*}
  is
  invertible since $F'$ is fully faithful. So we have a pair of
  composable morphisms
  \begin{equation*}
    \cd{
      \CA \ar|@{|}[r]^-{JM} & \CB' \ar|@{|}[r]^-{F'} & \CC
    }
  \end{equation*}
  in $\mathrm{Mod}$ for which the composite has a right adjoint, and
  the second arrow has a right adjoint with invertible unit. As it
  would in any bicategory, this implies that $JM = J'$ has a right
  adjoint, found as the composite of $F'$ with the right
  adjoint $\CC \tor \CA$. So $J'$ is Cauchy as claimed.
\end{proof}

\section{Examples}

\begin{example}\label{abscoeq}
  Let $\CV = \mathrm{Set}$, let $\CA = \mathbf{1}$ (the terminal
  category), let $\CB$ be the free category on the graph with two
  vertices $0, 1$ and two edges $1 \rightrightarrows 0$, and let
  $M \colon \mathbf{1} \tor \CB$ be the terminal module. In this case,
  if $F \colon \CB\to \CC$ takes the generating graph to the parallel
  pair $u, v \colon X \rightrightarrows Y$ in $\CC$, then a module
  morphism $\alpha \colon M \Rightarrow \CC(F,Z)$ involves maps
  $\alpha_1 \colon X \rightarrow Z$ and
  $\alpha_0 \colon Y \rightarrow Z$ with
  $\alpha_0 u = \alpha_1 = \alpha_0 v$. Putting $a = \alpha_0$, this
  is just a map $a \colon Y \rightarrow Z$ with $au=av$, i.e., a
  cofork under $(u,v)$: so an $M$-weighted colimit of $F$ is a
  coequaliser of $u$~and~$v$.

  Now look at Theorem~\ref{MT}(ii). The module morphism
  $\alpha \colon M \Rightarrow \CC(F,Z)$ is a cofork
  $a \colon Y \rightarrow Z$ under $(u,v)$, and by Yoneda, the module
  morphism $\beta \colon Z \Rightarrow F \circ M$ amounts to an
  element
  $\beta \in (F \circ M)(Z) = \mathrm{colim}_{B \in \CB}\CC(Z,FB)$.
  Now \eqref{alphabeta2} says that $\beta$ has a representative
  $b\in \CC(Z,Y)$ with $ab = 1_Z$; while \eqref{alphabeta1} is the
  requirement that $ba$ and $1_Y$ are identified in the coequaliser of
  the parallel pair
  $\CC(Y,u), \CC(Y,v) \colon \CC(Y,X) \rightrightarrows \CC(Y,Y)$ in
  $\mathrm{Set}$. By the usual construction of coequalisers in
  $\mathrm{Set}$, we thus re-find Proposition~5.3 of~\cite{Par1969},
  characterising the data required to make a coequaliser absolute as being:
  \begin{enumerate}[(i)]
  \item A map $a \colon Y \rightarrow Z$ with $au = av$;
  \item A map $b \colon Z \rightarrow Y$ with $ab = 1_Z$;
  \item Maps $y_0, \dots, y_n \colon Y \rightarrow Y$ and $x_1, \dots,
    x_n \colon Y \rightarrow X$ with $y_0 = 1_Y$ and $y_n = ba$ and,
    for each $1 \leqslant i \leqslant n$,
    either $ux_i = y_{i-1}$ and $vx_i = y_i$, or $ux_i = y_{i}$ and $vx_i = y_{i-1}$.
  \end{enumerate}

  Let us also examine the force of Theorem~\ref{MT2} in this
  situation. For simplicity, let us assume that $X \neq Y$ and $u \neq
  v$: then the category $\CB'$ of~\eqref{eq:factorisation} is the full subcategory of
  $\CC$ on the objects $X$ and $Y$, the diagram
  $F' \colon \CB' \rightarrow \CC$ is the inclusion functor, and the
  weight $M'$, which is no longer terminal, instructs us to throw away
  all of the diagram $F'$ except for the arrows $u$ and $v$, and then
  take the coequaliser.

  According to Theorem~\ref{MT2}, the weight $M'$ in this situation is
  \emph{absolute}. This is to say that, if we take a $\CB'$-shaped
  diagram in any category $\CD$, throw away everything except the
  arrows corresponding to $u$ and $v$, and take the coequaliser, then
  this coequaliser will be absolute. The reason for this is that the
  maps in (iii) above, exhibiting $a \colon Y \rightarrow Z$ as an
  absolute coequaliser of $u$ and $v$, live in $\CB'$ and their images
  in $\CD$ suffice to make the coequaliser \emph{there} absolute.

  To make this a little more concrete, consider the case of (i)--(iii)
  above where $n=1$: this yields the familiar notion of a
  \emph{split coequaliser}, wherein the fork $au = av \colon X \rightrightarrows Y
  \rightarrow Z$ is split by maps $b \colon Z \rightarrow Y$ and $x
  \colon Y \rightarrow X$ such that $ab = 1_Z$, $ux = 1_Y$ and $vx =
  ba$.
  In this situation, the category $\CB'$ contains, among other things,
  the parallel pair $u,v \colon X \rightrightarrows Y$ and the map
  $x \colon Y \rightarrow X$, satisfying $ux = 1_Y$ and $vxu =
  vxv$---so that the image of the parallel pair $u,v$ under any
  functor $\CB' \rightarrow \CD$ will be what is sometimes called a
  \emph{contractible pair}---whose coequaliser is equally a splitting
  of the idempotent $vx$, whence absolute.

  Of course, in this situation the category $\CB'$ will typically
  contain many other things than the data of a contractible pair. However,
  if the ``reason'' for the absoluteness of our original coequaliser had
  involved a case of (iii) with $n > 1$, then some of this other data
  would have been needed to force absoluteness of the weight $M'$.
  Thus, our choice of $\CB'$ is simply a blunt instrument which is
  known to work in all cases.

  Note that, in particular, $\CB'$ could well be of arbitrary (small)
  cardinality, even though $\CB$ itself is resolutely finite. It is
  apparent from our more detailed analysis that, for any particular
  absolute coequaliser, we can take instead of $\CB'$ a suitable some
  \emph{finite} extension of $\CB$ wherein the weight $M$ becomes
  absolute, thereby expressing any absolute coequaliser as a colimit
  by a finite absolute weight. It is unclear if this is a peculiar
  feature of this example, or if something similar is true more
  generally.
\end{example}

\begin{example}
  Take $\CV = \mathrm{Cat}$, so that $\CV$-categories are
  $2$-categories. As in~\cite{Schanuel1986The-free}, we have the
  $2$-category $\mathrm{Adj}$ freely generated by objects $X$ and $Y$
  and an adjunction $f \dashv u \colon X \rightarrow Y$. Let
  $\CA = \mathbf{1}$, the terminal $2$-category, let $\CB$ be the full
  sub-$2$-category $I \colon \CB \hookrightarrow \mathrm{Adj}$ of
  $\mathrm{Adj}$ on the object $Y$, and let
  $M = \mathrm{Adj}(I,X) \colon 1 \tor \CB$. In this case $\CB$ is the
  suspension of $\Delta_+$, the free strict monoidal category
  containing a monoid, so that a $\CV$-functor
  $F \colon \CB \rightarrow \CC$ is a monad $(T,t)$ in $\CC$. A module
  morphism $\alpha \colon M \Rightarrow \CC(F,Z)$ is now given by a
  $1$-cell $a \colon T \rightarrow Z$ together with a $2$-cell
  $\varphi \colon at \Rightarrow a$ satisfying associativity and
  unitality laws; we may speak of a \emph{right $t$-module}. An
  $M$-weighted colimit for $F$ is a universal right $t$-module, so a
  \emph{Kleisli object} for $(T,t)$; see~\cite{Street1972The-formal}.

  Consider Theorem~\ref{MT}(ii). Now $\alpha$ is a right $t$-module
  $(a \colon T \rightarrow Z, \varphi \colon at \Rightarrow a)$, while
  by Yoneda, $\beta \colon Z \Rightarrow F \circ M$ is an object of
  $(F \circ M)(Z) = \CC(Z, T)_{\CC(Z,t)}$, the Kleisli category in
  $\mathrm{Cat}$ of the monad
  $\CC(Z,t) \colon \CC(Z,T) \rightarrow \CC(Z,T)$, so equally an
  object $b \in \CC(Z,T)$. Now, condition~\eqref{alphabeta2} tells us
  like before that $ab = 1_Z$; while condition~\eqref{alphabeta1}
  makes the two assertions that:
  \begin{itemize}
  \item $ba = 1_T$ in $\CC(T,T)_{\CC(T,t)}$, that is, $b$ and $a$ are
    inverse isomorphisms;
  \item The image of the map $ba \varphi \colon bat \Rightarrow ba$ of
    $\CC(T,T)$ in $\CC(T,T)_{\CC(T,t)}$ is equal to the universal map
    $bat \Rightarrow ba$, which is to say that
    \begin{equation*}
      1_{bat} = bat \xRa{b\varphi} ba \xRa{ba\eta} bat\rlap{ .}
    \end{equation*}
  \end{itemize}
  But since 
$a$ is a right $t$-module, we also have that
  $b\varphi \circ ba\eta = 1_{ba}$. Thus $ba\eta = \eta$ is an
  invertible $2$-cell, and so the monad $(T,t)$ must be trivial, that is,
  isomorphic to the identity monad on $T$. Thus, a Kleisli object is
  absolute in a $2$-category if and only if it is the Kleisli object
  of a trivial monad.
\end{example}
\begin{remark}
  The preceding result is rather like the situation with coproducts in
  ordinary categories: a coproduct is absolute if and only if it is a
  trivial---which is to say, unary---coproduct. Of course, this result
  is sensitive to the enriching base: in a category enriched over
  commutative monoids, all finite coproducts are absolute, while in a
  category enriched over complete join-lattices, \emph{all} coproducts
  are absolute. In a similar way, the situation with Kleisli objects
  changes completely if we consider enrichment over categories with
  reflexive coequalisers: in that context, \emph{every} Kleisli object
  is absolute~\cite{18}.
\end{remark}

\begin{example}
  Take $\CV = \mathrm{Cat}$, take $\CA = \mathbf{1}$ as before, and
  take $\CB$ to be the locally discrete $2$-category to the left in
  \begin{equation*}
    \cd{
      {2} \ar@<6pt>[r]^{p } \ar[r]|{m} \ar@<-6pt>[r]_{q} &
      {1} \ar@<6pt>[r]^{d} \ar@{<-}[r]|{i} \ar@<-6pt>[r]_{c} &
      {0} 
    } \qquad \cd{
      {W} \ar@<6pt>[r]^{p } \ar[r]|{m} \ar@<-6pt>[r]_{q} &
      {X} \ar@<6pt>[r]^{d} \ar@{<-}[r]|{i} \ar@<-6pt>[r]_{c} &
      {Y} 
    }\qquad 
    \cd{
      {\mathbf{3}} \ar@<6pt>@{<-}[r]^{01} \ar@{<-}[r]|{02} \ar@{<-}@<-6pt>[r]_{12} &
      {\mathbf{2}} \ar@<6pt>@{<-}[r]^{0} \ar[r]|{0} \ar@{<-}@<-6pt>[r]_{1} &
      {\mathbf{1}} 
    }
  \end{equation*}
  subject to the simplicial identities $di = ci = 1_0$, $dp = dm$,
  $cm = cq$ and $cp = dq$. We denote the image of a $2$-functor
  $F \colon \CB \rightarrow \CC$ as centre above. We consider
  $M \colon \mathbf{1} \tor \CB$ the module whose values are displayed
  right above. Here, $\mathbf{2}$ and $\mathbf{3}$ are the posetal
  categories $0\leqslant 1$ and $0 \leqslant 1 \leqslant 2$, and the
  functors are named by giving their images.

  A module morphism $\alpha \colon M \Rightarrow \CC(F,Z)$ is a
  \emph{codescent cocone} under $F$, whose data are the $1$-cell
  $a = \alpha_0(0) \colon Y \rightarrow Z$ and $2$-cell
  $\theta = \alpha_1(0 \leqslant 1) \colon f \circ d \Rightarrow f
  \circ c$, subject to the conditions that $\theta i = 1_f$ and $\theta
  m = \theta q \circ \theta p$.
  An $M$-weighted colimit of $F$ is a universal codescent cocone, also
  termed a \emph{codescent object} for $F$.

  When $\CC = \mathrm{Cat}$, the codescent object of a diagram
  $F \colon \CB \rightarrow \mathrm{Cat}$ is the functor $A_F \colon Y
  \to \mathrm{codesc}(F)$ obtained by adjoining to $Y$ a family of morphisms
  $(\Theta_x \colon d(x) \rightarrow c(x))_{x \in X}$ (which will
  be the components of $\Theta \colon A \circ d \Rightarrow A \circ c$)
  subject to the following identities:
  \begin{enumerate}[(i)]
  \item $c(f) \circ \Theta_{x} = \Theta_{x'} \circ d(f)$ for all
    $f \colon x \rightarrow x'$ in $X$;
  \item $\Theta_{i(y)} = 1_{y}$ for all $y \in Y$;
  \item $\Theta_{m(w)} = \Theta_{q(w)} \circ \Theta_{p(w)}$ for
    all $w \in W$.
  \end{enumerate}
  Note, in particular, that $A$ is the identity on objects.
  
  Now, for a general $F \colon \CB \rightarrow \CC$, we consider once
  again Theorem~\ref{MT}(ii). The module morphism
  $\alpha \colon M \Rightarrow \CC(F,Z)$ is a codescent cocone
  $(a, \theta)$, while $\beta \colon Z \Rightarrow F \circ M$ is an
  object
  $\beta \in (F \circ M)(Z) = \mathrm{codesc}(\CC(Z,F\text{--}))$, so
  equally an object of $\CC(Z,Y)$. Once again, \eqref{alphabeta2} says
  that $ab = 1_Z$; while \eqref{alphabeta1} becomes the two
  requirements that:
  \begin{itemize}
  \item $ba$ and $1_{Y}$ are identified by 
    $A_{\CC(Y,F)} \colon \CC(Y,Y) \rightarrow \mathrm{codesc}(\CC(Y, F))$. Since
    this functor is bijective on objects, this just says that
    $ba = 1_{Y}$.
  \item The morphism  of $\CC(X,Y)$ given by:
    \begin{equation*}
          \cd[@-0.5em]{
      & Y \ar[dr]^-{a} \ar@/^9pt/@{=}[drr] \\
      X \ar[ur]^-{d} \ar[dr]_-{c} \dtwocell{rr}{\theta} & & Z \ar[r]^-{b} & Y\\
      & Y \ar[ur]_-{a} \ar@/_9pt/@{=}[urr]
    }
  \end{equation*}
  is identified by $A_{\CC(X,F)} \colon \CC(X,Y)
  \rightarrow \mathrm{codesc}(\CC(X, F))$ with the
  universally adjoined morphism
  $\Theta_{\mathrm{id}_X} \colon d \rightarrow c$.
  \end{itemize}
  Of course, the first requirement tells us that $a$ is an isomorphism
  with inverse $b$. However, the second requirement is quite
  non-trivial, due to the interactions of the identities (i)--(iii)
  for a codescent object in $\mathrm{Cat}$, and a complete
  characterisation in the spirit of~\cite{Par1969} would be both
  unpleasant and not particularly enlightening. In lieu of this, we
  provide a few \emph{sufficient} conditions for a codescent cocone to
  be absolute, which should illuminate the scope of the possibilities.

  Since we know already that $a$ is invertible, we may as well assume
  it to be the identity: thus, our codescent cocone under $F$ is specified by a
  single $2$-cell $\theta \colon d \Rightarrow c \colon Y \rightarrow X$ satisfying $\theta i
  = 1_{1_X}$ and $\theta m = \theta q \circ \theta p$.
  \begin{itemize}
  \item Suppose there is a $2$-cell
    $\gamma \colon i \circ d \Rightarrow 1_X \colon X \rightarrow X$
    such that $d \circ \gamma = 1_d$ and $c \circ \gamma = \theta$.
    Then the codescent cocone is absolute, since in
    $\mathrm{codesc}(\CC(Y,F))$ we have that
    \begin{equation*}
      \Theta_{1_X} = \Theta_{1_X} \circ 1_d = \Theta_{1_X} \circ d(\gamma) = c(\gamma) \circ \Theta_{i(d)} = c \circ \gamma = \theta \rlap{ .}
    \end{equation*}
  \item Dually, the codescent cocone is absolute if there is $\delta
    \colon 1_X \Rightarrow ic$ with $d\delta = \theta$ and $c\delta =
    1_c$.
  \item Suppose there is $k \colon X \Rightarrow W$ for which
    $mk = 1_X$, along with $2$-cells $\gamma \colon id \Rightarrow pk
    \colon X \rightarrow X$
    and $\delta \colon qk \Rightarrow ic \colon X \rightarrow X$ for
    which
    \begin{equation*}
      d \gamma = 1_d \quad c \delta = 1_c \quad \text{and} \quad d \delta \circ c \gamma = \theta\rlap{ .}
    \end{equation*}
    Then the codescent cocone is absolute, since:
    \begin{align*}
      \Theta_{1_X} &= \Theta_{mk} = \Theta_{qk} \circ \Theta_{pk} = c \delta \circ \Theta_{qk} \circ \Theta_{pk} \circ d \gamma \\ &= \Theta_{ic} \circ d \delta \circ c \gamma \circ \Theta_{id} = d\delta \circ c\gamma = \theta\rlap{ .}
    \end{align*}
  \item Suppose there is a $1$-cell $g \colon X \rightarrow X$ such
    that $dg = d$ and $cg = c$, along with $2$-cells
    $\gamma \colon i \circ d \Rightarrow g$ and $\delta \colon 1_X
    \Rightarrow g \colon X \rightarrow X$
    such that
    \begin{equation*}
      d \gamma = 1_d \qquad c\gamma = \theta \quad  d\delta = 1_d \quad \text{and} \quad  c\delta = 1_c\rlap{ .}
    \end{equation*}
    Then the codescent cocone is
    absolute, since:
    \begin{equation*}
      \Theta_{1_X} = c\delta \circ \Theta_{1_X} = \Theta_{g} \circ d\delta = \Theta_g \circ d\gamma = c\gamma \circ \Theta_{id} = c\gamma = \theta\rlap{ .} 
    \end{equation*}
  \end{itemize}
  The reader is encouraged to explore other examples combining the
  techniques illustrated above.
\end{example}

\section{Remarks on Cauchy weights}
\label{sec:remarks-cauchy-weigh}

To conclude the paper, we discuss some important necessary conditions
to be a Cauchy weight, and circumstances under which these are in fact
sufficient. In this section, we assume our base for enrichment $\CV$
is a monoidal category (and not a bicategory.)

\begin{proposition}\label{landsinduals}
  Let $\CV$ be locally finitely presentable as a closed
  category~\cite{Kelly1982Structures}, and let $\CC$ be a
  $\CV$-category with finitely presentable homs. If the module
  $M \in [\CC^{\mathrm{op}},\CV]$ is finitely presentable---and in
  particular, if it is Cauchy---then it lands in the subcategory
  $\CV_{\mathrm{fp}}$ of $\CV$ consisting of the finitely presentable
  objects.
\end{proposition}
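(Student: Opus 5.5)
The plan is to show that the finitely presentable objects of $[\CC^{\mathrm{op}},\CV]$ are exactly the closure of the representables under finite (conical) colimits and retracts. Evaluation at any object $C$ preserves such colimits, and the representables are sent into $\CV_{\mathrm{fp}}$ by hypothesis; this will give the result.

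\emph{Step 1.} Since $\CV$ is locally finitely presentable as a closed category, $\CV_{\mathrm{fp}}$ is closed under finite colimits and retracts. It contains the unit $I$, and it is closed under $\otimes$ (part of Kelly's definition). Also, for each object $C$, evaluation $\mathrm{ev}_C\colon[\CC^{\mathrm{op}},\CV]\to\CV$ is given by $\mathrm{ev}_C(M)=M(C)$ and preserves all colimits, these being computed pointwise. On representables we have $\mathrm{ev}_C(\CC(-,D))=\CC(C,D)\in\CV_{\mathrm{fp}}$ by the hypothesis on $\CC$. So $\mathrm{ev}_C$ sends every finite colimit of retracts of objects of the form $X\otimes\CC(-,D)$ with $X\in\CV_{\mathrm{fp}}$ into $\CV_{\mathrm{fp}}$.

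\emph{Step 2.} I would show that every finitely presentable $M$ is a retract of such a finite colimit. The presheaf category is locally finitely presentable with a strong generator given by the objects $X\otimes\CC(-,D)$, for $X\in\CV_{\mathrm{fp}}$ and $D\in\CC$. These are finitely presentable, because maps out of them are given by $\CV_0(X,M(D))$, and both $X$ and evaluation commute with filtered colimits. So every $M$ is the filtered colimit of the finite colimits of such generators mapping to it, namely the canonical diagram built from the density of the generators. If $M$ is finitely presentable, the identity of $M$ factors through one stage of this colimit, and hence $M$ is a retract of a finite colimit of generators. Combining with Step 1, $M(C)$ is a retract of an object of $\CV_{\mathrm{fp}}$, and so lies in $\CV_{\mathrm{fp}}$.

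\emph{Step 3 (the Cauchy case).} If $M$ is Cauchy, then by Theorem~\ref{reconstr} the $M$-weighted colimit functor $\{M,-\}$ on $[\CC^{\mathrm{op}},\CV]$ preserves all colimits. Moreover, $[\CC^{\mathrm{op}},\CV](M,-)\cong N\circ-$, where $N$ is the right adjoint module, and this is cocontinuous. In particular it preserves filtered colimits, so $M$ is finitely presentable, even in the enriched sense, and Step 2 applies.

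The main obstacle is Step 2: making precise that finitely presentable objects are retracts of \emph{finite} colimits of the generators. I would handle this with the standard fact that in a locally finitely presentable category, each object is the filtered colimit of finite colimits of a strong generating set of finitely presentable objects. The only check needed is that the $X\otimes\CC(-,D)$ form a strong generator, which follows from Yoneda and the fact that $\CV_{\mathrm{fp}}$ generates $\CV_0$.
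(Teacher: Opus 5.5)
Your proof is correct, but it takes a different route from the paper's. The paper never characterises the finitely presentable presheaves. For fixed $A$ it uses the Yoneda/coend isomorphism $[MA,V]\cong[\CC^{\mathrm{op}},\CV](M,[\CC(A,-),V])$, natural in $V$. It then observes that $V\mapsto[\CC(A,-),V]$ preserves filtered colimits: this holds pointwise, since each $\CC(A,B)$ is finitely presentable. Finite presentability of $M$ then makes $[MA,-]$ preserve filtered colimits, and hence also $\CV_0(MA,-)\cong\CV_0(I,[MA,-])$.

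You instead prove a structure result: every finitely presentable presheaf is a retract of a finite colimit of objects $X\otimes\CC(-,D)$ with $X\in\CV_{\mathrm{fp}}$. You then evaluate, using only that $\CV_{\mathrm{fp}}$ is closed under finite colimits, retracts and $\otimes$.

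\textbf{What your route buys.} It gives an explicit description of the finitely presentable $M$, from which the pointwise conclusion is transparent.

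\textbf{What it costs.}
\begin{itemize}
\item It relies on the Gabriel--Ulmer-type theorem that, in a cocomplete category with a strong generator of finitely presentable objects, the finitely presentable objects are retracts of finite colimits of generators.
\item It needs $\CC$ small, so that your generators form a set.
\item It needs the passage from enriched to ordinary finite presentability of $M$. You use this silently when you factor $1_M$ through a stage of the canonical colimit. It is harmless: apply $\CV_0(I,-)$, which preserves filtered colimits because $I$ is finitely presentable.
\end{itemize}

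The paper's short computation needs none of this and works directly with the enriched notion. It is also the form of argument the paper reuses for the small-projective variant stated after the proof.

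Your Step~3 matches the paper's remark preceding the proof. Only its second sentence, $[\CC^{\mathrm{op}},\CV](M,-)\cong N\circ-$, is needed. The first sentence, about $\{M,-\}$, is muddled (that notation is for weighted limits) and can be dropped.
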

Here, we note that if $M \in [\CC^\mathrm{op}, \CV]$ is Cauchy with
right adjoint $M^\ast \in [\CC, \CV]$, then 
$[\CC^\mathrm{op}, \CV](M, \text{--}) \cong M^\ast \otimes (\text{--}) \colon [\CC^\mathrm{op}, \CV]
\rightarrow \CV$ will preserve small limits, and so in particular
filtered colimits; whence Cauchy implies finitely presentable.
\begin{proof}
For such a finitely presentable $M$ and any $A\in \CC$, we must show that $[MA, -] \dd \CV\to \CV$ preserves filtered colimits.
We have isomorphisms
\begin{eqnarray*}
[MA,V] & \cong & [\int^B MB\ \ot \ \CC(A,B),V] \\
& \cong & [\CC^{\mathrm{op}}, \CV](M, [\CC(A,-),V])
\end{eqnarray*}
both $\CV$-natural in $V$.
Since each $\CC(A,B)$ is finitely presentable, $[\CC(A,B), V]$ preserves filtered colimits in the variable $V$. 
Since colimits in $[\CC^{\mathrm{op}}, \CV]$ are formed pointwise,  $[\CC(A,-), V]$ preserves filtered colimits
in the variable $V$. Then $M$ finitely presentable implies $[\CC^{\mathrm{op}}, \CV](M, [\CC(A,-),V])$ preserves filtered colimits
in the variable $V$.  
\end{proof}
In the case that $\CV$ is not just locally finitely presentable as a
closed category, but ``locally $0$-presentable''---i.e.,
$\CV = [\CC^\mathrm{op}, \mathrm{Set}]$ under the convolution monoidal
structure coming from a symmetric monoidal structure on $\CC$---an
analogous proof shows that: if $\CC$ is a $\CV$-category with
small-projective homs, then any small-projective (= Cauchy)
$M \in [\CC^\mathrm{op}, \CV]$ takes values in the subcategory
$\CV_{\mathrm{sp}}$ of small-projective objects.

\begin{proposition}\label{ppzero}
Let $\CV$ be locally finitely presentable as a closed category. Let $\CC$ be a small
$\CV$-category which can be written as
\begin{eqnarray*}
  \CC = \bigcup_{i} \CC_i
\end{eqnarray*}
where $\{\CC_i : i \in I\}$ is a directed family of full
sub-$\CV$-categories, each of which is a cosieve in $\CC$, i.e.,
$\CC(W,U)=0$ for $U\in \CC_i$ and $W\notin \CC_i$. If
$M\in [\CC^{\mathrm{op}}, \CV]$ is finitely presentable then it is the
Kan extension of its own restriction to some $\CC_i$. In particular,
this is the case if $M \colon \CI\tor \CC$ is a Cauchy module.
\end{proposition}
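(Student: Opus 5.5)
Write $J_i \colon \CC_i \hookrightarrow \CC$ for the inclusions. The aim is to show that $M$ is a left Kan extension $\mathrm{Lan}_{J_i^{\mathrm{op}}}(MJ_i^{\mathrm{op}})$ for some $i$. The plan is to exhibit $M$ as the filtered colimit of these Kan extensions over $i\in I$, and then use finite presentability of $M$ to factor the identity of $M$ through one stage.

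\emph{Step 1: the cosieve condition.} Put $M_i := \mathrm{Lan}_{J_i^{\mathrm{op}}}(MJ_i^{\mathrm{op}})$, so
\begin{equation*}
M_i(W) = \int^{U\in\CC_i} MU\otimes \CC(W,U).
\end{equation*}
If $W\notin\CC_i$, then $\CC(W,U)=0$ for every $U\in\CC_i$. Since $\otimes$ preserves the initial object in each variable (closedness), we get $M_i(W)=0$.

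If $W\in\CC_i$, then because $J_i$ is fully faithful, the Yoneda lemma in $\CC_i$ gives $M_i(W)\cong MW$.

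So $M_i$ agrees with $M$ on $\CC_i$ and is $0$ off $\CC_i$. The counit $\epsilon_i\colon M_i\to M$ is the identity on $\CC_i$ and the unique map from $0$ elsewhere.

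\emph{Step 2: the colimit.} For $i\le j$ we have $\CC_i\subseteq\CC_j$, and the Kan extensions give canonical maps $M_i\to M_j$ over $M$. This makes $(M_i)$ a directed diagram with a cocone $(\epsilon_i)$ to $M$. Colimits in $[\CC^{\mathrm{op}},\CV]$ are pointwise, so it suffices to check at each $W$. Since $\CC=\bigcup\CC_i$, the object $W$ lies in $\CC_i$ for all $i$ above some $i_0$, by directedness. On that cofinal part the diagram at $W$ is constantly $MW$ with identity transition maps, so its colimit is $MW$. Hence $M\cong\mathrm{colim}_i M_i$ via the $\epsilon_i$.

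\emph{Step 3: finite presentability.} Because $M$ is finitely presentable and the colimit in Step 2 is filtered, the identity $1_M$ factors through some $\epsilon_i$. That is, there is $s\colon M\to M_i$ with $\epsilon_i s=1_M$, so $M$ is a retract of $M_i$.

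The main obstacle is to turn this retraction into an isomorphism $\epsilon_i\cong 1$. For this, consider $s\epsilon_i\colon M_i\to M_i$.
\begin{itemize}
\item On $\CC_i$, $\epsilon_i$ is the identity, so $s$ is its inverse there and $s\epsilon_i$ is the identity there.
\item A map out of $M_i=\mathrm{Lan}(MJ_i^{\mathrm{op}})$ is determined by its restriction along $J_i$, by the universal property of the Kan extension. Hence $s\epsilon_i=1$.
\end{itemize}
So $\epsilon_i$ is invertible, and $M$ is the Kan extension of its own restriction to $\CC_i$.

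If instead we work with the non-enriched factorisation statement, we would first pass to the underlying ordinary categories. There we use that finite presentability is with respect to conical filtered colimits, which the colimit of Step 2 is.

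\emph{The Cauchy case.} The closing clause follows from the remark after Proposition~\ref{landsinduals}: a Cauchy $M$ is finitely presentable.
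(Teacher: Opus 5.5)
Your proof is correct, and up to the factorisation $1_M=\epsilon_i s$ it is the paper's argument. The two diverge only in how that retraction is upgraded to an isomorphism.

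The paper checks pointwise. At $W\in\CC_n$ the counit is already invertible. At $W\notin\CC_n$ the identity of $MW$ factors through $M^{(n)}W=0$, so $MW$ is a retract of the initial object and hence initial. This is where the paper actually uses the cosieve hypothesis.

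You instead use two facts: maps out of $\mathrm{Lan}_{J_i^{\mathrm{op}}}(MJ_i^{\mathrm{op}})$ are determined by their restriction along $J_i^{\mathrm{op}}$, and $\epsilon_i J_i^{\mathrm{op}}$ is invertible because $J_i$ is fully faithful. In effect, you show that a split-epic counit of the coreflection $\mathrm{Lan}_{J_i^{\mathrm{op}}}\dashv(-)\circ J_i^{\mathrm{op}}$ is invertible.

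Notice that your argument then never uses the vanishing of $M_i$ off $\CC_i$. Step~2 only needs each $W$ to lie in $\CC_i$ for all large $i$, and Step~3 only needs $J_i$ fully faithful. So your route shows that a finitely presentable $M$ is the Kan extension of its restriction to some $\CC_i$ for \emph{any} directed union of full subcategories. The cosieve condition serves only, via Step~1, to identify that Kan extension as extension by zero. That gives the finite-support conclusion $MW=0$ for $W\notin\CC_i$, which is what the ``only if'' half of Proposition~\ref{singlerep} uses.

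The paper's version is more elementary, needing nothing beyond ``a retract of $0$ is $0$'', but it is tied to the cosieve assumption.
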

\begin{proof}
Let $M^{(i)}\in [\CC^{\mathrm{op}}, \CV]$ be the Kan extension of the restriction of $M$ to 
the full sub-$\CV$-category $\CC_i$; thus we have the following, where
the second equality follows since $\CC_i$ is a cosieve in
$\CC$:
\begin{equation*}
  M^{(i)}W = \int^{U\in \CC_i}\CC(W,U)\ot MU =
  \begin{cases}
    MW & \text{ if } W \in \CC_i\text{,} \\ 0 & \text{ otherwise.}
  \end{cases}
\end{equation*}
Clearly, $M$ is the directed colimit of the $M^{(i)}$'s.
Since $[\CC^{\mathrm{op}}, \CV](M,-)$ preserves filtered colimits, we have
\begin{eqnarray*}
[\CC^{\mathrm{op}}, \CV](M,M) \cong \mathrm{colim}_i[\CC^{\mathrm{op}}, \CV](M,M^{(i)}) \ .
\end{eqnarray*}
This is true in $\CV$ at the enriched level, but also in the underlying category of $\CV$.
So each endomorphism of $M$ factors through some $M^{(n)}$. In particular, there exists $n$ such that 
\begin{eqnarray*}
\xymatrix{
M \ar[rd]_{}\ar[rr]^{1_M}    && M  \\
& M^{(n)}  \ar[ru]_{\mathrm{in}_n} &
}
\end{eqnarray*}
commutes.
Of course, $\mathrm{in}_n$ has invertible component at any $W \in
\CC_n$. But for $W\notin \CC_n$, we have the diagram
\begin{eqnarray*}
\xymatrix{
& MW \ar[rd]_{}\ar[rr]^{1_{MW}}    && MW  \\
0 \ar[rr]^{1_0} \ar[ru]^{!} & & 0  \ar[ru]_{!} & 
}\rlap{ .}
\end{eqnarray*}  
which shows $\mathrm{in}_n$ is also invertible there, and so $M^{(n)}
\cong M$.
\end{proof}

In some important cases, Propositions~\ref{landsinduals} and \ref{ppzero} provide the sufficient conditions for 
an $M$ to be Cauchy. Compare the next result with Lemma 4 of \cite{BMT}.

\begin{proposition}\label{singlerep}
  Suppose $\CV$ is the monoidal category of vector spaces over a field
  $k$ of characteristic zero. Let $\CC=k\CG$ be the free
  $\CV$-category on a groupoid $\CG$ with finite homs. A functor
  $M \colon \CC^\mathrm{op} \rightarrow \CV$ is Cauchy as a module
  $\CI\tor \CC$ if and only if $Mx$ is finite dimensional for all
  objects $x$ and zero on all but finitely many connected components.
\end{proposition}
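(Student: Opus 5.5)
Necessity comes from Propositions~\ref{landsinduals} and~\ref{ppzero}; sufficiency from the semisimplicity of finite groupoid algebras in characteristic zero together with Theorem~\ref{reconstr}.

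\emph{Necessity.} Suppose $M$ is Cauchy. $\CV=\mathrm{Vect}_k$ is locally finitely presentable as a closed category, with finitely presentable objects the finite-dimensional spaces. Each hom $\CC(x,y)=k\CG(x,y)$ is finite dimensional because $\CG$ has finite homs. So Proposition~\ref{landsinduals} applies and gives $\dim Mx<\infty$ for every $x$.

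For the support condition, I index over the finite sets $S$ of connected components of $\CG$. For each $S$ let $\CC_S$ be the full subcategory on objects lying in components in $S$. These form a directed family with union $\CC$. Each $\CC_S$ is a cosieve, since $\CC(W,U)=0$ whenever $W$ and $U$ lie in different components. Proposition~\ref{ppzero} then shows $M$ is the Kan extension of its restriction to some $\CC_S$. The formula in that proof shows such an extension vanishes outside $\CC_S$, so $M$ is zero off finitely many components.

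\emph{Sufficiency.} First reduce to a single component. Cauchy modules are closed under finite direct sums, since a finite sum of left adjoints in $\mathrm{Mod}$ is a left adjoint (use the sum of the right adjoints, as $\CV$ is additive). Also $M$ splits as the direct sum of its restrictions to the finitely many components in its support, each extended by zero.

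Now fix a connected component and pick an object $x$ in it, with automorphism group $G=\CG(x,x)$, which is finite. The full subcategory on $x$ is Morita equivalent to the whole component: every object is isomorphic to $x$, so restriction along the inclusion is an equivalence of presheaf categories, and this equivalence preserves and reflects the property of being Cauchy. So $M$ on this component corresponds to a finite-dimensional right $kG$-module $V=Mx$, and it suffices to show $V$ is Cauchy, i.e.\ small-projective. Being finite dimensional, $V$ is finitely generated. By Maschke's theorem, using $\mathrm{char}\,k=0$, $kG$ is semisimple, so $V$ is a direct summand of a finite free module $kG^n$. A finite direct sum of representables is Cauchy, and Cauchy modules are closed under retracts (splitting idempotents, as in the Par\'e-exercise argument used for Theorem~\ref{reconstr}(iv)), so $V$ is Cauchy.

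Alternatively, one can write down the right adjoint directly: $V^*=\mathrm{Hom}_k(V,k)$ with the left $G$-action. Then $[\CC^{\mathrm{op}},\CV](V,-)\cong(-)\otimes_{kG}V^*$ via the averaging map $\frac1{|G|}\sum_g$, which is exactly where characteristic zero enters.

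The main obstacle is this sufficiency step: making precise the Morita reduction from a component to its automorphism group, and the closure of Cauchy modules under finite sums and retracts. I would prove these via Theorem~\ref{reconstr}(i): sums and retracts of left adjoints in $\mathrm{Mod}$ are again left adjoints, with right adjoints given by the corresponding sums and retracts.
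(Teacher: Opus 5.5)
Your proof is correct and follows essentially the same route as the paper. Necessity comes from Propositions~\ref{landsinduals} and~\ref{ppzero}, which the paper leaves implicit and you spell out correctly; sufficiency reduces to a single finite group via an equivalence of groupoids, then uses Maschke to exhibit the representation as a retract of a finite sum of representables, together with closure of Cauchy modules under finite direct sums and retracts. The only difference is cosmetic: you get $V$ directly as a summand of $kG^n$ from semisimplicity, whereas the paper first decomposes $V$ into irreducibles (monomorphisms split) and then shows each irreducible is a retract of $kG$ (epimorphisms split).
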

\begin{proof} It remains to prove the ``if'' direction. It does no
  harm to replace $\CG$ by an equivalent groupoid which is the
  disjoint union of groups. On doing so, we note that $M$ is a finite
  direct sum of objects $N$ of $[\CC^{\mathrm{op}}, \CV]$ with $Nx=0$
  for all but one $x$. This reduces the problem to basic facts in the
  linear representation theory of the single finite group
  $G_x = \CG(x,x)$. Using Maschke's Theorem in the form that
  monomorphisms split in $[G_x^{\mathrm{op}}, \CV]$, we deduce that
  every finite-dimensional representation is a direct sum of
  irreducible representations. Using Maschke's Theorem in the form
  that epimorphisms split in $[G_x^{\mathrm{op}}, \CV]$, we deduce
  that every irreducible representation is a retract of the group
  algebra $k G_x$ with the regular action. However, $k G_x$ is the
  representable functor in $[G_x^{\mathrm{op}}, \CV]$ and so is
  Cauchy. This implies that our finite-dimensional representation is a
  finite direct sum of retracts of representables and therefore is
  Cauchy in $[G_x^{\mathrm{op}}, \CV]$ and hence in
  $[\CG^{\mathrm{op}}, \CV]\cong [\CC^{\mathrm{op}}, \CV]$. Yet, $M$
  is a finite direct sum of such objects and so Cauchy.
\end{proof}

{\small \vskip0.5\baselineskip \noindent\textbf{Declarations}. \emph{Data
availability}: no data was generated in the preparation of this
manuscript. \emph{Author contribution}: equal collaboration between the two
authors. \emph{Ethics}: not applicable. \emph{Funding}: both
authors acknowledge the support of Australian Research Council
Discovery Grants DP160101519 and DP190102432. \emph{Competing interests}: no competing interests.}

\appendix

\end{document}